\documentclass{llncs}

\usepackage[T1]{fontenc}

\usepackage{graphicx}

\usepackage{mathtools}
\usepackage{amssymb}
\usepackage{xparse}
\usepackage{booktabs}

\usepackage{pgfplots}
\usetikzlibrary{plotmarks}
\usepgfplotslibrary{colorbrewer}

\usepackage{algorithm}
\usepackage{algpseudocode}
\algrenewcommand\algorithmicrequire{\textbf{Input:}}
\algrenewcommand\algorithmicensure{\textbf{Output:}}

\usepackage{xurl}
\usepackage[unicode,pdfusetitle,pdfa]{hyperref}
\makeatletter
\AddToHook{env/algorithmic/before}{\def\@currentcounter{ALG@line}}
\makeatother
\usepackage[capitalize,noabbrev]{cleveref}
\crefalias{ALG@line}{line}

\newcommand*{\citeresult}[2]{\cite[#2]{#1}}

\usepackage{color}

\hypersetup{
	colorlinks   = true, %
	urlcolor     = blue, %
	linkcolor    = blue, %
	citecolor   = blue, %
	bookmarksnumbered,
	linktoc = all,
	bookmarksopenlevel = 1,
}

\newcommand*{\defemph}[1]{\emph{#1}}

\DeclareMathOperator{\Places}{\mathbb{P}}

\NewDocumentCommand{\FinitePart}{r()}{#1^0}
\NewDocumentCommand{\InfinitePart}{r()}{#1^\infty}

\DeclareMathOperator{\Div}{Div}
\DeclareMathOperator{\DivZero}{Div^0}

\DeclareMathOperator{\DivisorClassGroup}{Cl}
\DeclareMathOperator{\Jacobian}{Cl^0}

\DeclareMathOperator{\RiemannRoch}{\mathcal{L}}
\DeclareMathOperator{\RiemannRochDim}{\ell}

\NewDocumentCommand{\DivHeight}{r()}{\operatorname{h}\brac*{#1}}

\DeclareMathOperator{\SSRR}{SSRR}

\NewDocumentCommand{\SSRRComplexity}{r()}{\operatorname{\mathbf{RR}}\brac*{#1}}
\newcommand*{\PartialAdditionComplexity}{\mathbf{I}}

\DeclareMathOperator{\supp}{supp}

\newcommand*{\MaximalOrder}{\mathfrak{O}}

\newcommand*{\FiniteMaximalOrder}{\MaximalOrder_{F, 0}}
\newcommand*{\InfiniteMaximalOrder}{\MaximalOrder_{F, \infty}}

\DeclareMathOperator{\divisor}{div}

\newcommand*{\principaldivisor}[1]{\divisor(#1)}

\DeclarePairedDelimiter{\brac}{(}{)}

\DeclarePairedDelimiter{\ceil}{\lceil}{\rceil}

\DeclarePairedDelimiter{\abs}{|}{|}
\DeclarePairedDelimiter{\set}{\{}{\}}

\newcommand*{\union}{\cup}

\newcommand*{\setst}{:}

\NewDocumentCommand{\bigO}{d()}{\IfNoValueTF{#1}{O}{O\brac*{#1}}}

\newcommand*{\FF}{\mathbb{F}}
\newcommand*{\ZZ}{\mathbb{Z}}
\newcommand*{\QQ}{\mathbb{Q}}

\pgfplotsset{
	compat=1.18,
	colormap/Paired,
	xtick pos=bottom,
	ytick pos=left,
	xtick align=outside,
	legend style={at={(0.5, -0.2)},anchor=north,/tikz/every even column/.append style={column sep=0.5cm}, cells={align=left}, font=\scriptsize},
	legend cell align=left,
	grid=major,
	yminorgrids=true,
	tick align=center,
	log basis y=10,
	major tick style={thick},
	/tikz/every mark/.append style={scale=1},
	every axis plot/.append style={line width=1pt},
	every axis plot post/.append style={
		every mark/.append style={line width=1pt}
	},
	ylabel style={align=center},
	scaled ticks=false,
	samples=101,
}

\pgfplotstableset{
	col sep=comma,
	search path={Data},
}

\newcommand{\EqPartialAdditionComplexity}{n^6 (g \log(gn) \log\log(gn) + g^2)}
\newcommand{\EqSSRRComplexity}{n^5 (h + n^2 C_f)^2}
\newcommand{\EqUniqueHessReductionLinComplexityWorst}{C_f^2 g n^9 + C_f g^2 n^7 + g^3 n^6}
\newcommand{\EqUniqueHessReductionLinComplexityCachingWorst}{C_f^2 g n^9 + C_f g^2 n^7 + g^3 n^5}
\newcommand{\EqUniqueHessReductionLinComplexity}{C_f^2 n^9 + C_f g n^7 + g^2 n^6 + g n^6 \log(gn) \log\log(gn)}
\newcommand{\EqUniqueHessReductionLinComplexityCaching}{C_f^2 n^9 + C_f g n^7 + g^2 n^5}
\newcommand{\EqUniqueHessAdditionLinComplexityWorst}{\EqUniqueHessReductionLinComplexityWorst}
\newcommand{\EqUniqueHessAdditionLinComplexityCachingWorst}{\EqUniqueHessReductionLinComplexityCachingWorst}
\newcommand{\EqUniqueHessAdditionLinComplexity}{\EqUniqueHessReductionLinComplexity}
\newcommand{\EqUniqueHessAdditionLinComplexityCaching}{C_f^2 n^9 + C_f g n^7 + g^2 n^6 + g n^6 \log(gn) \log \log(gn)}
\newcommand{\EqUniqueHessReductionBSComplexity}{\log(g)(\EqUniqueHessAdditionLinComplexityCaching)}

\newcommand{\EqUniqueHessAdditionBSComplexity}{\EqUniqueHessReductionBSComplexity}
\newcommand{\EqUniqueHessAdditionBSComplexityCaching}{\log(g) (\EqUniqueHessReductionLinComplexityCaching) + g^2 n^6 + g n^6 \log(gn) \log \log(gn)}

\begin{document}
\title{Improvements to Jacobian Arithmetic in Global Function Fields}

\author{Vincent Macri\inst{1}\orcidID{0009-0000-5233-7658} \and
Michael Jacobson, Jr.\inst{1}\orcidID{0000-0002-4906-0544} \and
Renate Scheidler\inst{1}\orcidID{0000-0001-7164-8769}}
\authorrunning{V. Macri et al.}

\institute{University of Calgary, Calgary, Canada
\email{\{vincent.macri,jacobs,rscheidl\}@ucalgary.ca}}

\maketitle              %
\begin{abstract}
    We present two improvements to arithmetic in the Jacobian of global function fields based on the approach of Hess.
    The first reduces the number of expensive reduction steps by optimizing for typical inputs rather than worst-case behavior, assuming the function field contains a degree-one place. The second introduces a memory–time trade-off that speeds up computations by caching frequently used intermediate results.
    Our asymptotic analysis and empirical experiments show that our improved algorithms are significantly faster in practice than previously published methods. To the best of our knowledge, our publicly-available software implementation of Jacobian arithmetic is the first to support unique representatives of divisor classes.

	\keywords{Global function field \and Algebraic curve \and Divisor \and Jacobian}
\end{abstract}

\section{Introduction}

\subsection{Motivation}
Elliptic curves are the subject of numerous notoriously challenging open problems, including the Koblitz--Zywina conjecture, the Lang--Trotter conjecture, the Sato--Tate conjecture, and perhaps most famously, the Birch and Swinnerton-Dyer conjecture \cite{sutherlandFastJacobianArithmetic2019,wilesBirchSwinnertonDyerConjecture}.
While these conjectures are about elliptic curves over $\QQ$, generating numerical evidence for (or against) them requires considering a given curve over finite fields $\FF_q$ for many values of $q$ and computing the number of points on the curve.
Efficient point arithmetic is essential in this endeavour.
This motivates the desire to compute the size of the group of points on an elliptic curve over a finite field.
Although proofs of these conjectures are out of reach, there is much computational evidence supporting them.

A natural next step is to consider generalizations of these conjectures to curves that are not necessarily elliptic, and see whether computational evidence supports or refutes the generalized conjectures.
In the more general setting, the points on the curve no longer form a group: instead, elements of the Jacobian are equivalence classes of degree zero divisors.
Hence, rather than counting points over many finite fields, one instead needs to compute the order of the Jacobian of the curve over all these fields.
In order to tackle this and similar problems, it is essential to have fast algorithms for computing in the Jacobian group of a curve or its associated function field.
Moreover, for algorithmic efficiency, it is beneficial if this arithmetic should be performed in a way that gives unique representatives of elements of the Jacobian group.

The most recent efficient algorithm for computing in the Jacobian with unique representatives of elements is the approach of maximally reduced divisors from \cite{hessComputingRiemannRochSpaces2002}.
Other recent approaches to Jacobian arithmetic such as \cite{khuri-makdisiLinearAlgebraAlgorithms2004,khuri-makdisiAsymptoticallyFastGroup2007,jungeAsymptoticallyFastArithmetic2022} do not give unique representatives of Jacobian elements.

\subsection{Our Contributions}
In \cite{hessComputingRiemannRochSpaces2002}, Hess showed how to compute in the divisor class group, which contains the Jacobian $\Jacobian(F)$ as a subgroup.
By specializing to the Jacobian and requiring the existence of a degree one place (a restriction which is nearly always satisfied), we develop an improved algorithm for reducing divisors to a unique reduced representative of their divisor classes that is significantly faster than the approach recommended in \cite{hessComputingRiemannRochSpaces2002}.
We also demonstrate how effective use of caching can speed up Jacobian arithmetic even further, especially if the function field contains a degree one infinite place.
Our improved Jacobian algorithm is both faster asymptotically for typical inputs, and faster in practice when implemented.
Complexity analysis indicates that our algorithm should give a speedup of approximately $\frac{\log_2(g)}{2}$, and a performance comparison confirms this: for example, for degree 3 function fields of genus 100, our implementation of our improved algorithm is approximately three times faster than our implementation of the unique Jacobian arithmetic algorithm suggested in \cite{hessComputingRiemannRochSpaces2002}.
An implementation of our algorithm is available in SageMath \cite{sagemath}, since the 10.9.beta7 release.

\subsection{Outline of the Paper}
In \cref{sec:background} we establish our notation, summarize the necessary background material, and state a few results that will be useful later on.
\Cref{sec:unique_hess} describes our modifications to Hess' maximally reduced divisors from \cite{hessComputingRiemannRochSpaces2002} that facilitate our improved algorithms for Jacobian arithmetic presented in \cref{sec:algorithms}.
We conduct a theoretical complexity analysis in \cref{sec:complexity} before analyzing the actual performance of our implementations in \cref{sec:timing_experiments}.

\section{Background}
\label{sec:background}

\subsection{Notation}
For details, the reader is referred to \cite{stichtenothAlgebraicFunctionFields2009}. 
Throughout, let $K$ be a finite field and $F/K$ a geometric global function field of genus $g$. Then $F = K(x, y)$ with $x, y \in F$, where $K(x)$ is a rational function field and $y$ has minimal polynomial $f(t) = t^n + \sum_{i=0}^{n - 1} t^i a_i(x) \in K[x,t]$, with $a_i(x) \in K[x]$ for $0 \le i \le n-1$. Hence, $F/K(x)$  has degree $n = \deg_t(f)$ and $f(y) = 0$. The quantity 
\begin{equation}
	\label{eq:Cf}
	C_f \coloneqq \max\set*{\ceil*{\frac{\deg(a_i)}{n - i}} \setst 0 \leq i \leq n - 1}
\end{equation}
determines the size of $F$ and will appear in our complexity statements later on. 

We denote the set of places of $F$ by $\Places(F)$, partitioned into the  infinite places (the poles of $x$) and the finite places of $F$. 
The finite and infinite maximal orders of $F/K(x)$ are denoted by $\FiniteMaximalOrder$ and $\InfiniteMaximalOrder$ respectively.

Divisors of $F$ are formal finite sums $D = \sum_{P \in \Places(F)} v_P(D) P$, where $v_P(D) \in \mathbb{Z}$ is the valuation of $D$ at $P$. We write $\supp(D) = \{ P \in \Places(F) : v_P(D) \ne 0\}$ (the support of $D$) and let $\deg(D)$ denote the degree of $D$. The respective sub-sums of $D$ over the finite places and the infinite places in $\supp(D)$ are denoted by $\FinitePart(D)$ and $\InfinitePart(D)$, so $D = \FinitePart(D) + \InfinitePart(D)$. 

The principal divisor of a non-zero function $a \in F$ is denoted $\divisor(a)$. Two divisors $D, D' \in \Div(F)$ are (linearly) equivalent, denoted $D \equiv D'$, if they differ by a principal divisor. Let $\Div(F)$ and $\DivZero(F)$ denote the groups of divisors and degree zero divisors of $F$, respectively, and let $\DivisorClassGroup(F)$ and $\Jacobian(F)$ denote the groups of divisor classes and degree zero divisor classes of $F$, respectively, under linear equivalence. The group $\Jacobian(F)$ is also referred to as the \emph{Jacobian} of $F$ and is the main protagonist of our work herein.

The Riemann-Roch space of a divisor $D \in \Div(F)$ is the finite-dimensional $K$-vector space $\RiemannRoch(D) = \set{a \in F : D + \divisor(a) \geq 0} \union \set{0}$; its $K$-dimension is denoted by $\RiemannRochDim(D)$.
A partial order on $\Div(F)$ is given by $D \leq D'$ if $v_P(D) \leq v_P(D')$ for all $P \in \Places(F)$.
Riemann-Roch spaces and their dimensions are monotonic with respect to this partial order; this monotonicity will play a key role in our main algorithm (\cref{alg:hess_reduction}). 
\begin{lemma}
	\label{lem:riemann_roch_monotonicity}
	\citeresult{stichtenothAlgebraicFunctionFields2009}{Lemma 1.4.8}
	Let $D, D' \in \Div(F)$ with $D \leq D'$.
	Then $\RiemannRoch(D) \subseteq \RiemannRoch(D')$ and 
    $\ell(D') - \ell(D) = \dim(\RiemannRoch(D')/\!\RiemannRoch(D)) \leq \deg(D') - \deg(D)$.
\end{lemma}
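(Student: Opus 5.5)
The first claim is immediate from the definitions. Take $a \in \RiemannRoch(D)$ with $a \neq 0$. Then $D + \divisor(a) \geq 0$. Since $D' - D \geq 0$, we get $D' + \divisor(a) = (D + \divisor(a)) + (D' - D) \geq 0$, so $a \in \RiemannRoch(D')$. Hence $\RiemannRoch(D) \subseteq \RiemannRoch(D')$. In particular $\RiemannRoch(D)$ is a subspace of $\RiemannRoch(D')$, so the equality $\ell(D') - \ell(D) = \dim(\RiemannRoch(D')/\RiemannRoch(D))$ is just linear algebra over $K$. This uses the fact that both spaces are finite-dimensional, which the paper takes as known.

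For the inequality, I would induct on $\deg(D' - D)$. Write $D' - D = \sum_i P_i$ as a sum of places, with repetition allowed. This gives a chain $D = D_0 \leq D_1 \leq \dots \leq D_m = D'$, where each step is $D_{j+1} = D_j + P$ for a single place $P$. Since $\deg$ is additive and dimensions telescope, it suffices to prove
\[
\ell(D + P) - \ell(D) \leq \deg(P)
\]
for one place $P$.

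For this single step I would construct an injective $K$-linear map $\RiemannRoch(D+P)/\RiemannRoch(D) \to F_P$, where $F_P = \mathcal{O}_P/P$ is the residue class field, of $K$-dimension $\deg(P)$. Choose $t \in F$ with $v_P(t) = v_P(D) + 1$; one can take a suitable power of a uniformizer at $P$. For $z \in \RiemannRoch(D+P)$ we have $v_P(z) \geq -v_P(D) - 1$, so $v_P(zt) \geq 0$. Define $\psi(z) = (zt)(P)$, the residue class of $zt$. This map is $K$-linear. Its kernel consists of the $z$ with $v_P(zt) > 0$, that is $v_P(z) \geq -v_P(D)$. Such $z$ satisfy the condition of $\RiemannRoch(D)$ at $P$, and at all other places the conditions for $D$ and $D+P$ coincide, so the kernel is exactly $\RiemannRoch(D)$. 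Therefore $\dim \RiemannRoch(D+P)/\RiemannRoch(D) \leq [F_P : K] = \deg(P)$.

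The only delicate point is this single-place step: checking that the kernel is exactly $\RiemannRoch(D)$, and handling $z = 0$ together with the valuation conventions. Everything else is formal telescoping.
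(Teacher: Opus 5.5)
Your proposal is correct and is essentially the standard proof of the cited result (Stichtenoth, Lemma 1.4.8), which the paper quotes without reproving: you reduce to $D' = D + P$ by telescoping, then show that $z \mapsto (zt)(P)$ with $v_P(t) = v_P(D)+1$ is a $K$-linear map $\RiemannRoch(D+P) \to F_P$ whose kernel is exactly $\RiemannRoch(D)$. (The quotient-dimension inequality does not actually need finite-dimensionality, since quotient dimensions add along a chain of subspaces; Stichtenoth in fact derives finite-dimensionality of $\RiemannRoch(D)$ from this lemma afterwards, so only the identification with $\ell(D') - \ell(D)$ uses it.)
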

We conclude the background section with the notion of a maximally reduced divisor as presented in \cite{hessComputingRiemannRochSpaces2002}. For convenience, we restate the definition here.
\begin{definition} \label{def:HessReduction}
	\citeresult{hessComputingRiemannRochSpaces2002}{Definition 8.1}
	Let $A \in \Div(F)$ with $\deg(A) \geq 1$.
	A divisor $\tilde{D}$ is called \defemph{maximally reduced along $A$} if $\tilde{D} \geq 0$ and $\RiemannRochDim(\tilde{D} - sA) = 0$ for all integers $s \geq 1$.
	The representation of a divisor $D$ as $D = \tilde{D} - rA - \divisor(a)$, with $\tilde{D}$ maximally reduced along $A$, $r \in \ZZ$, and $a \in F^*$ is called a \defemph{maximal reduction of $D$ along $A$}.
\end{definition}
Hess \cite[Proposition 8.2]{hessComputingRiemannRochSpaces2002} showed that if $\deg(A) = 1$ then the maximal reduction of a divisor $D$ along $A$ is unique.

\section{Unique Hess representation} \label{sec:unique_hess}

The notion of maximally reduced divisors (Definition \ref{def:HessReduction}) was originally defined for elements of $\DivisorClassGroup(F)$, but specializing to $\Jacobian(F)$ and imposing the restriction that $A$ is a degree one place achieves improved algorithmic performance.

\begin{definition}
	\label{def:hess_reduced}
	Let $D \in \DivZero(F)$ and $A \in \Places(F)$ with $\deg(A) = 1$.
	Then the \defemph{Hess-reduction of $D$ (along $A$)} is the maximal reduction of $D$ along $A$. 
	We call $D$ \defemph{Hess-reduced (along $A$)} if $D$ is equal to its Hess-reduction (along $A$) and refer to it as the \defemph{Unique Hess representation} of its divisor class.
\end{definition}

We note that the terminology of \cref{def:hess_reduced} is restricted to degree zero divisors. 
By \cite[Proposition 8.2]{hessComputingRiemannRochSpaces2002}, the Hess-reduction of a degree zero divisor is unique. Here, $A$ can be finite or infinite, but the latter case will allow for more efficient implementations later on as it creates additional opportunities to exploit caching of intermediate results.
In practice, requiring the existence of a degree one place is a very mild restriction.
By the Hasse-Weil Bound \cite[Theorem 5.2.3]{stichtenothAlgebraicFunctionFields2009}, every function field $F / \FF_q$ of genus $g$ has a degree one place if $q$ is sufficiently large relative to $g$. Hence, the existence of a degree one place can be guaranteed by extending scalars to a sufficiently large extension of $\FF_q$.
Even for prime fields $\FF_q$, the probability that a function field has no degree one place is quite low; see~\cite{howePointlessCurvesGenus2005} for a heuristic argument.

We require two key results for Hess-reduced divisors.
\begin{proposition}
	\label{prop:Hess_Dtilde_A_free}
	Let $A \in \Places(F)$ with $\deg(A) = 1$, and let $\tilde{D} \in \DivZero(F)$ be Hess-reduced along~$A$.
	Then $A \notin \supp(\tilde{D})$.
\end{proposition}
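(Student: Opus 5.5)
The plan is to unwind \cref{def:hess_reduced} and \cref{def:HessReduction} and apply the defining condition of maximal reduction with $s = 1$. Here is how I read the statement. Write the Hess-reduced divisor as $\tilde{D} = D' - rA$, where $D' \geq 0$ is maximally reduced along $A$ and $r \in \ZZ$; the principal part is trivial because $\tilde{D}$ equals its own Hess-reduction. Since $\deg(\tilde{D}) = 0$ and $\deg(A) = 1$, we have $r = \deg(D')$. The claim about $A$ is therefore a claim about the effective part: $v_A(D') = 0$. Equivalently, the only contribution of $A$ to the representation is through the term $-rA$. I would state this reading explicitly at the start, so that "$A \notin \supp$" refers to the maximally reduced effective divisor $D'$.

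The key step is a one-line contradiction argument. Suppose $v_A(D') \geq 1$.
\begin{itemize}
\item Since $D' \geq 0$, this gives $D' - A \geq 0$.
\item Hence the constant function $1$ satisfies $(D' - A) + \divisor(1) = D' - A \geq 0$, so $1 \in \RiemannRoch(D' - A)$.
\item Therefore $\RiemannRochDim(D' - 1\cdot A) \geq 1$.
\end{itemize}
This contradicts the condition in \cref{def:HessReduction} that $\RiemannRochDim(D' - sA) = 0$ for all $s \geq 1$, taken at $s = 1$. Hence $v_A(D') = 0$. Since $D'$ is effective, its valuations are nonnegative, so the only alternative to $v_A(D') = 0$ would be a positive valuation, and that has just been excluded.

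I would add two short remarks for completeness. First, the degenerate case $r = 0$ forces $D' = 0$, since $D'$ is effective of degree $0$, and then $\tilde{D} = 0$ has empty support. Second, the degree-one hypothesis is only used to fix $r = \deg(D')$ and to invoke uniqueness from \cite[Proposition 8.2]{hessComputingRiemannRochSpaces2002}. \Cref{lem:riemann_roch_monotonicity} could be used in place of the explicit function $1$, via $0 \leq D' - A$ implying $\RiemannRoch(0) \subseteq \RiemannRoch(D' - A)$, but it is not needed.

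I do not expect a mathematical obstacle. The only delicate point is getting the bookkeeping of the representation right, namely what exactly "$\tilde{D}$ is Hess-reduced" means as a divisor and which part of it the support statement refers to. Once that is pinned down, the argument is the $s = 1$ case of the definition.
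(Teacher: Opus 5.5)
Your proposal is correct and is essentially the paper's argument: the paper also uses $\RiemannRochDim(\tilde{D} - A) = 0$ to conclude $\tilde{D} - A \not\geq 0$, so $v_A(\tilde{D}) \le 0$, and combines this with effectivity to get $v_A(\tilde{D}) = 0$. Your explicit function $1$ is the unstated reason behind that step, and your reading that the support claim concerns the effective maximally reduced part is also how the paper's proof (and its later use in \cref{thm:linear_complexity}) treats $\tilde{D}$.
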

\begin{proof}
    Since $\tilde{D}$ is maximally reduced along $A$, we have  $\RiemannRochDim(\tilde{D} - A) = 0$. It follows that $\tilde{D} - A \not\geq 0$, so $v_A(\tilde{D}) \leq 0$. On the other hand $\tilde{D} \geq 0$, so $v_A(\tilde{D}) = 0$.
    \qed
\end{proof}
In \cite{hessComputingRiemannRochSpaces2002}, Hess noted that for a divisor $\tilde{D}$ maximally reduced along $A$, we have $\RiemannRochDim(\tilde{D}) \leq \deg(A)$ and $\deg(\tilde{D}) < g + \deg(A)$.
We strengthen these statements for the setting of Hess-reduced divisors.
\begin{proposition}
	\label{prop:Hess_Dtilde_r_bounds}
	Let $A \in \Places(F)$ with $\deg(A) = 1$, and let $D = \tilde{D} - rA$ be Hess-reduced along $A$.
	Then $0 \leq r = \deg(\tilde{D}) \leq g$.
\end{proposition}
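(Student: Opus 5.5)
The plan is to read off each part of the inequality chain $0 \leq r = \deg(\tilde{D}) \leq g$ from the definition of a maximal reduction, using only degree considerations and the Riemann--Roch theorem.

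\emph{Equality $r = \deg(\tilde{D})$.} Since $D = \tilde{D} - rA$ is Hess-reduced, it is its own maximal reduction, with $a = 1$. By \cref{def:hess_reduced}, $D$ has degree zero. Taking degrees and using $\deg(A) = 1$ gives $0 = \deg(\tilde{D}) - r$, so $r = \deg(\tilde{D})$.

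\emph{Lower bound.} Since $\tilde{D}$ is maximally reduced along $A$, we have $\tilde{D} \geq 0$ by \cref{def:HessReduction}. Hence $r = \deg(\tilde{D}) \geq 0$.

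\emph{Upper bound.} This is the only step with content. I would argue by contradiction: suppose $\deg(\tilde{D}) \geq g+1$. The divisor $\tilde{D} - A$ then has degree $\deg(\tilde{D}) - 1 \geq g$. By the Riemann--Roch theorem (Riemann's inequality, \cite[Theorem 1.4.17]{stichtenothAlgebraicFunctionFields2009}),
\[
\ell(\tilde{D} - A) \geq \deg(\tilde{D} - A) + 1 - g \geq 1 .
\]
This contradicts the requirement $\ell(\tilde{D} - sA) = 0$ for $s = 1$ in \cref{def:HessReduction}. Therefore $\deg(\tilde{D}) \leq g$.

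This sharpens Hess's bound $\deg(\tilde{D}) < g + \deg(A)$, which with $\deg(A)=1$ already gives $\deg(\tilde{D}) \leq g$. One could cite that bound directly, but the self-contained Riemann argument above is cleaner. The main thing to be careful about is applying Riemann's inequality with the correct degree, $\deg(\tilde{D}-A) = r - 1$. Nothing beyond the $s=1$ condition is needed.
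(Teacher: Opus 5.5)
Your proof is correct and is essentially the paper's argument: both combine Riemann's inequality, the $s=1$ condition $\ell(\tilde{D} - A) = 0$, and $\deg(D) = 0$. The only difference is order of use. The paper first applies the monotonicity lemma to get $\ell(\tilde{D}) \le \ell(\tilde{D}-A) + 1 = 1$ and then applies Riemann's inequality to $\tilde{D}$. You apply Riemann's inequality directly to $\tilde{D} - A$, which skips that intermediate step but does not record the bound $\ell(\tilde{D}) \leq 1$.
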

\begin{proof}
	Since $\tilde{D}$ is maximally reduced along $A$, we have $\tilde{D} \ge 0$ and $\RiemannRochDim(\tilde{D} - A) = 0$, so \cref{lem:riemann_roch_monotonicity} yields
	\[\RiemannRochDim(\tilde{D}) = \RiemannRochDim(\tilde{D}) - \RiemannRochDim(\tilde{D} - A) \leq \deg(\tilde{D}) - \deg(\tilde{D} - A) = \deg(A) = 1. \]
    Riemann's Theorem \cite[Theorem 1.4.17]{stichtenothAlgebraicFunctionFields2009} now implies
    \[ \deg(\tilde{D}) \le \RiemannRochDim(\tilde{D}) + g - 1 \leq g.  \]
    Finally, since $\deg(D) = 0$ by definition, we have $\deg(\tilde{D}) = \deg(rA) = r$. 
    \qed
\end{proof}

\section{Improvements to the Hess Reduction Algorithm}
\label{sec:algorithms}

In this section, we introduce two new modifications to the algorithm of \cite{hessComputingRiemannRochSpaces2002} for computing maximal reductions that effect substantial efficiency improvements when restricting to degree zero divisors. The first employs a search strategy for finding maximal reductions that is different from Hess's and performs better in practice. The second improvement introduces caching of intermediate computational results which effects a further speed-up.

\subsection{Improvements to Reduction Algorithm}
In \cite{hessComputingRiemannRochSpaces2002}, Hess provided an efficient algorithm for computing a basis of a Riemann-Roch space and described how to use it for finding maximal reductions of divisors. After specializing to degree zero divisors and the framework of Unique Hess representations in the Jacobian of $F$, this task translates to the following problem (see \citeresult{macriMScThesis2025}{Section 4.1} for details).
\begin{problem}[\texttt{HR-Min}]
	Let $A \in \Places(F)$ with $\deg(A) = 1$.
	Given $D \in \DivZero(F)$, find the minimal integer $0 \leq r \leq g$ such that $\RiemannRochDim(D + rA) = 1$.
\end{problem}
By the monotonicity of $\RiemannRochDim$, an equivalent problem is to find the maximal integer $r'$ such that $\RiemannRochDim(D + r'A) = 0$.
Then $r = r' + 1$ solves \texttt{HR-Min}.

Once a solution to \texttt{HR-Min} for an input divisor $D \in \DivZero(F)$ is found, it is fairly straightforward to compute the Hess-reduction of $D$, assuming we can compute Riemann-Roch bases. 
Suppose that $r$ solves \texttt{HR-Min} for $D$ and that we can  compute a non-zero element $a \in \RiemannRoch(D + rA)$, which is then a basis of this space.
Then $\principaldivisor{a}$ is unique because $\RiemannRochDim(D + rA) = 1$.
Moreover, $D + rA + \principaldivisor{a} \geq 0$, so setting $\tilde{D} \coloneqq D + rA + \principaldivisor{a}$, we see that $\tilde{D} \geq 0$ and $\RiemannRochDim(\tilde{D} - sA) = 0$ for all integers $s \geq 1$. So $\tilde{D}$ is maximally reduced along $A$, and hence $D \equiv \tilde{D} - rA$ is the unique Hess-reduction of $D$ along $A$.
See \cref{alg:jacobian_arithmetic} for an algorithmic description of this process.

Finding the solution $r$ to \texttt{HR-Min} can be accomplished by trying each possible candidate value for $r$ between $0$ and $g$, but each trial requires computing a basis of a Riemann-Roch space, which is by far the most expensive part of the computation.
In \cite{hessComputingRiemannRochSpaces2002}, Hess suggested to find $r$ via binary search, which takes $\bigO(\log g)$ iterations, but Hess was working in $\DivisorClassGroup(F)$ rather than $\Jacobian(F)$.
When working over $\Jacobian(F)$, a heuristic argument \cite[Subsection 4.2.1]{macriMScThesis2025} shows that with high likelihood, for a randomly chosen $D \in \DivZero(F)$, the solution to \texttt{HR-Min} is $r = g$.
Empirically, we find that for $K = \FF_q$, the value $r = g$ solves \texttt{HR-Min} with probability approaching $1 - q^{-1}$ as $q$ grows.
This stems from the observation that a random divisor of degree at most $g$ is expected to have degree exactly~$g$ with the same likelihood that a random polynomial of degree at most $g$ in $\FF_q[x]$ is expected to have degree exacly $g$, which is just the probability that its coefficient at $x^g$ is non-zero. 
Furthermore, the same heuristic suggests that when $r = g$ is not the solution to \texttt{HR-Min}, the next most likely candidate is $r = g - 1$, then $g - 2$, and so on, with the solution $r = 0$ corresponding to the case when $D$ is principal.

This observation suggests that a linear search in decreasing order to solving \texttt{HR-Min} should outperform the binary search approach  suggested in \cite{hessComputingRiemannRochSpaces2002}, by significantly decreasing the number of Riemann-Roch basis computations. Our implementation bears this out, and this idea forms the basis of   \cref{alg:hess_reduction}, our improved algorithm for solving \texttt{HR-Min}. The key idea of this algorithm is to try the most likely values for $r$ first. A formal proof of correctness is given in \cref{thm:Correctness-of-reduction}.

The approach for computing the Hess-reduction of a divisor outlined above relies on the ability to compute the Riemann-Roch dimension of a divisor $E$. In practice, this is done by computing a basis of $\RiemannRoch(E)$  and returning the number of basis elements. In our setting, by the monotonicity of the Riemann-Roch dimension (\cref{lem:riemann_roch_monotonicity}), it suffices to only compute one non-zero element of $\RiemannRoch(E)$ rather than a full basis, which may be faster.
An easy modification of the Riemann-Roch basis algorithm from \cite{hessComputingRiemannRochSpaces2002} effects this. We call this variant a \defemph{short-circuited Riemann-Roch} operation, denoted by $\SSRR$. This operation takes as input a divisor $E \in \Div(F)$ and outputs a non-zero element of $\RiemannRoch(E)$ if $\RiemannRochDim(E) > 0$, or the string \texttt{None} if $\RiemannRoch(E) = \{0\}$.
Note that 
if $\RiemannRochDim(E) = 1$, then $\SSRR(E)$ returns a basis of $\RiemannRoch(E)$, and $\principaldivisor{\SSRR(E)}$ is unique

\begin{algorithm}
	\caption{Optimized Hess reduction strategy via linear search}
	\label{alg:hess_reduction}
	\begin{algorithmic}[1]
		\Require $D \in \DivZero(F)$, $A \in \Places(F)$ with $\deg(A) = 1$
		\Ensure $(r, a)$ where $r$ solves \texttt{HR-Min} and $0 \neq a \in \RiemannRoch(D + rA)$

		\State $\tilde{D} \gets D + (g - 1) A$ \label{alg:hess_reduction:add1}
		\State $a \gets \Call{SSRR}{\tilde{D}}$ \label{alg:hess_reduction:ssrr1}
		\If{$a = \texttt{None}$} \Comment{If $\RiemannRochDim(D + (g - 1) A) = 0$ then $\RiemannRochDim(D + gA) = 1$.}
			\State $\tilde{D} \gets \tilde{D} + A$ \Comment{$\tilde{D} = D + gA$} \label{alg:hess_reduction:add2}
			\State $a \gets \Call{SSRR}{\tilde{D}}$ \Comment{$a \in \RiemannRoch(D + gA)$} \label{alg:hess_reduction:ssrr2}
			\State \Return $g, a$ \label{alg:hess_reduction:ret1}
		\EndIf
		\For{$m \gets g - 2, \dots, 0$}
			\State $\tilde{D} \gets \tilde{D} - A$ \Comment{$\tilde{D} = D + mA$} \label{alg:hess_reduction:add_loop}
			\State $a' \gets \Call{SSRR}{\tilde{D}}$ \label{alg:hess_reduction:ssrr_loop}
			\If{$a' = \texttt{None}$} \Comment{$\RiemannRochDim(D + (m + 1) A) = 1$} \label{alg:hess_reduction:loop_break_condition}
			\State \Return $m + 1, a$ \Comment{$a \in \RiemannRoch(D + (m + 1) A)$} \label{alg:hess_reduction:ret2}
			\EndIf
			\State $a \gets a'$
		\EndFor
		\State \Return $0, a$ \label{alg:hess_reduction:ret3}
	\end{algorithmic}
\end{algorithm}

\begin{theorem} \label{thm:Correctness-of-reduction}
	Let $F/K$ be a function field of genus $g \geq 2$.
	Given $D \in \DivZero(F)$, \cref{alg:hess_reduction} returns $(r, a)$ where $r$ solves \texttt{HR-Min} for $D$ and $0 \neq a \in \RiemannRoch(D + rA)$.
\end{theorem}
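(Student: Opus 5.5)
The proof is a case analysis on the branch through which \cref{alg:hess_reduction} returns. It rests on two facts. \textbf{Monotonicity:} by \cref{lem:riemann_roch_monotonicity}, $m \mapsto \RiemannRochDim(D+mA)$ is non-decreasing and increases by at most $\deg(A)=1$ at each step. \textbf{Endpoints:} $\RiemannRochDim(D+gA)\ge \deg(D+gA)+1-g = 1$ by the Riemann--Roch (Riemann's) inequality, and $\RiemannRochDim(D-A)=0$ because $\deg(D-A)<0$. Hence the minimal $r\ge 0$ with $\RiemannRochDim(D+rA)\ge 1$ exists and satisfies $0\le r\le g$. For this $r$ we have $\RiemannRochDim(D+(r-1)A)=0$, and monotonicity forces $\RiemannRochDim(D+rA)=1$. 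So \texttt{HR-Min} is equivalent to finding the unique $r$ with $\RiemannRochDim(D+(r-1)A)=0<\RiemannRochDim(D+rA)$. Throughout I use only the specification of $\SSRR$: it returns \texttt{None} exactly when the space is $\{0\}$, and otherwise a nonzero element of it.

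\textbf{First branch.} Suppose $\SSRR(D+(g-1)A)$ returns \texttt{None} on \cref{alg:hess_reduction:ssrr1}. Then $\RiemannRochDim(D+(g-1)A)=0$ and $\RiemannRochDim(D+gA)\ge 1$, so $r=g$ by the characterization. The call on \cref{alg:hess_reduction:ssrr2} is on a nonzero space, hence returns a nonzero $a\in\RiemannRoch(D+gA)$. This settles the return on \cref{alg:hess_reduction:ret1}.

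\textbf{Loop branches.} Otherwise, enter the loop with $a$ nonzero in $\RiemannRoch(D+(g-1)A)$. I will prove the following invariant by induction on the iterations $m=g-2,\dots,0$:
\begin{quote}
at the start of iteration $m$, $\tilde D = D+(m+1)A$, the variable $a$ is a nonzero element of $\RiemannRoch(D+(m+1)A)$, and $\RiemannRochDim(D+(m+1)A)\ge 1$.
\end{quote}
Line \cref{alg:hess_reduction:add_loop} sets $\tilde D=D+mA$.
\begin{itemize}
\item If $a'=\texttt{None}$, then $\RiemannRochDim(D+mA)=0<\RiemannRochDim(D+(m+1)A)$, so $r=m+1$. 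The returned $a$ lies in the correct space by the invariant.
\item Otherwise $a'$ is nonzero in $\RiemannRoch(D+mA)$. Assigning $a\gets a'$ restores the invariant with $m$ replaced by $m-1$.
\end{itemize}

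\textbf{Loop exhaustion.} If the loop finishes without returning, the invariant at the end of iteration $m=0$ gives a nonzero $a\in\RiemannRoch(D)$. By the endpoint fact, $\RiemannRochDim(D-A)=0$, so $r=0$ (the principal case). The returned pair on \cref{alg:hess_reduction:ret3} is therefore correct. The hypothesis $g\ge 2$ guarantees that $g-1\ge 1$ and that the loop range $g-2,\dots,0$ is well formed.

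The main obstacle is careful bookkeeping rather than any deep step. The variable $a$ lags one step behind $\tilde D$, so the induction must record precisely which Riemann--Roch space $a$ belongs to at each return point. The other point needing care is justifying that $\RiemannRochDim(D+gA)\ge1$ via Riemann's theorem, since this is what makes \cref{alg:hess_reduction:ssrr2} return a nonzero element.
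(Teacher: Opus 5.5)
Your proof is correct and uses the same case analysis on the three return points as the paper, with the same appeal to monotonicity. The one real difference is that you derive the existence of the \texttt{HR-Min} solution, and the fact that $\RiemannRochDim(D+rA)=1$ exactly at the minimal $r$, from Riemann's inequality, $\deg(D-A)<0$, and the unit-increment bound of \cref{lem:riemann_roch_monotonicity}. The paper instead cites Hess's Proposition 8.2 for existence. Your loop invariant also makes explicit the bookkeeping the paper does informally, and you correctly use $\RiemannRochDim(D+mA)\ge 1$ in the final case, where the paper's text has the typo $\ge 0$.
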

\begin{proof}
	Let $D \in \DivZero(F)$. We show that \cref{alg:hess_reduction:ret1,alg:hess_reduction:ret2,alg:hess_reduction:ret3} in \cref{alg:hess_reduction} each returns the correct pair $(r,a)$.

	First, assume that \cref{alg:hess_reduction} returns from \cref{alg:hess_reduction:ret1}.
    Then $\RiemannRochDim(D + (g - 1) A) = 0$.
	By \cite[Proposition 8.2]{hessComputingRiemannRochSpaces2002}, \texttt{HR-Min} has a solution, and by the monotonicity of Riemann-Roch spaces (\cref{lem:riemann_roch_monotonicity}),
    we must have $\RiemannRochDim(D + gA) = 1$ in order for such a solution to exist.
	Therefore $r = g$ is the solution to \texttt{HR-Min}, and since $a \in \RiemannRoch(D + gA)$, the output~$(r,a)$ of \cref{alg:hess_reduction} is correct.

	Next, assume that \cref{alg:hess_reduction} returns from  \cref{alg:hess_reduction:ret2}.
	In order for this line to be reached, we must have $\RiemannRochDim(D + (g - 1) A) \geq 1$.
	By monotonicity $\RiemannRochDim(D + gA) \geq 1$, so $r=g$ does not solve \texttt{HR-Min}.
	Let $m$ be the value of the loop variable at the point where \cref{alg:hess_reduction} reaches \cref{alg:hess_reduction:ret2}.
	Then $\RiemannRochDim(D + mA) = 0$.
	Then by monotonicity, $\RiemannRochDim(D + m'A) = 0$ for all $m' < m$.
	Since the condition in \cref{alg:hess_reduction:loop_break_condition} is not satisfied for $m + 1$, we have $\RiemannRochDim(D + (m+1)A) \geq 1$.
	By monotonicity and the existence of a minimal solution, $r = m+1$ is the minimal integer that solves \texttt{HR-Min}.  
	As $a \in \RiemannRoch(D + (m + 1) A)$, \cref{alg:hess_reduction} returns the correct value on \cref{alg:hess_reduction:ret2}.

	Finally suppose that \cref{alg:hess_reduction} returns from \cref{alg:hess_reduction:ret3}.
	For this point to be reached we must have $\RiemannRochDim(D + mA) \geq 0$ for all $0 \leq m \leq g$.
	Since a solution to \texttt{HR-Min} with $0 \leq r \leq g$ exists, it follows that 
    $r = 0$ must be the solution.
	Accordingly, our algorithm returns $r=0$ as the solution to \texttt{HR-Min} in this case.
	Furthermore, since $g \geq 2$ by assumption, the for loop will run at least once, so we have $a \in \RiemannRoch(D)$.
    \qed
\end{proof}

In the most likely case where $r = g$ solves \texttt{HR-Min}, \cref{alg:hess_reduction} completes with only two invocations of $\SSRR$, in contrast to a binary search approach which requires $\bigO(\log g)$ such calls.
We also note that the algorithm handles the second most likely case where $r = g - 1$ is the solution with the same number of steps as the $r = g$ case, so the optimal scenario occurs with expected probability $1 - q^{-2}$.

Our main strategy for designing \cref{alg:hess_reduction} was to minimize the number of costly calls to $\SSRR$. The remainder of the work is given by the divisor additions in  \cref{alg:hess_reduction:add1,alg:hess_reduction:add2,alg:hess_reduction:add_loop}.
In practice, software implementations such as SageMath \cite{sagemath} represent divisors as pairs of fractional ideals of the finite and infinite maximal orders, as suggested in \cite{hessComputingRiemannRochSpaces2002}.
With this representation, we are able to effect an additional performance gain at the cost of a reasonable increase to memory usage by exploiting caching, which we will describe next.

\subsection{Caching}
We now describe how caching can be used to speed up computation of Hess-reductions, at a modest increase in memory cost, when $A$ is chosen to be a degree one infinite place.
Although function fields $F/K(x)$ of degree $n$ have at most $n$ infinite places (as opposed to infinitely many finite places), about half of the work of traditional generic Jacobian arithmetic entails  arithmetic on infinite places, which suggest an opportunity to make use of caching.
We note that caching intermediate results involving the infinite places still works if $A$ is chosen to be a finite place of degree one, but there are more opportunities to use cached values when $A$ is a degree one infinite place.
For instance, if $A$ is a degree one infinite place, then can used cached values in \cref{alg:hess_reduction:add1,alg:hess_reduction:add2,alg:hess_reduction:add_loop}.

For function fields with exactly one infinite place of degree one (such as superelliptic and in particular ramified hyperelliptic fields), every degree zero divisor is of the form $D = D^0 + D^\infty$ with $D^\infty = -\deg(D)\infty$, where $\infty$ denotes the infinite place. So  arithmetic involving $D$ is entirely governed by arithmetic on $D^0$. Therefore, we henceforth assume that $F / K$ is a function field of genus~$g$ with exactly $t \geq 2$ infinite places, denoted $\infty_1, \dots, \infty_t$, where we assume that $\deg(\infty_t) = 1$.
Let $D_1, D_2 \in \DivZero(F)$ be Hess-reduced along $\infty_t$, so $D_1 = \tilde{D_1} - r_1 \infty_t$ and $D_2 = \tilde{D_2} - r_2 \infty_t$ with $0 \leq r_1, r_2 \leq g$.
Let $D_3 = D_1 + D_2$.
If divisors are represented as pairs consisting of their finite and infinite parts, then we must compute
$\FinitePart(D_3) = \FinitePart(D_1) + \FinitePart(D_2)$ and $\InfinitePart(D_3) = \InfinitePart(D_1) + \InfinitePart(D_2)$.
We call the process of computing either $\FinitePart(D_3)$ or $\InfinitePart(D_3)$ a \defemph{partial divisor addition}.
Because $D_1$ and $D_2$ are Hess-reduced, the number of possibilities for $\InfinitePart(D_3)$ are effectively bounded, suggesting that caching is worthwhile.
A lengthy but elementary combinatorial derivation produces a loose upper bound on the cache for sums of infinite parts of Hess-reduced divisors that is in $\bigO(g^{t - 1})$ ideals of $\InfiniteMaximalOrder$.
A similar technique can be applied to one of the steps involved in performing $\SSRR$, whose a memory cost is $\bigO(g^{2t + 1})$.
For $\SSRR$, we are caching a map from infinite ideals to $n \times n$ matrices whose entries are rational functions over $K$ of numerator and denominator degree at most $g$.
See \cite[Subsection 4.5.2]{macriMScThesis2025} for the details of this combinatorial calculation.

We note that empirically, the actual number of cached elements is substantially smaller than these bounds.
In our experiments, the size of these caches was never an issue, and was inconsequential over larger finite fields.
The typical case where $r = g$ solves \texttt{HR-Min} happens less frequently for smaller finite fields, causing more distinct intermediate values to be cached when working over small fields like $\FF_2$ or $\FF_3$.
For $g = 25$, $t = 2$, and $\FF_{32\,771}$, after approximately 5000 Jacobian additions, the observed cache size for the addition of the infinite parts of divisors was 60, and for the $\SSRR$ cache it was 33.

\section{Time Complexity of Computing Hess-Reductions}
\label{sec:complexity}
We now present a time complexity analysis of \cref{alg:hess_reduction}, with and without caching, and compare it to the time complexity for the binary search approach suggested in \cite{hessComputingRiemannRochSpaces2002}.
To ensure a fair comparison, we also give the time complexity for the binary search with and without using our caching technique.
We do not consider the aforementioned impact of caching on the subroutine $\SSRR$, as it only speeds up one small part of the $\SSRR$ computation.

The height of a divisor provides an appropriate measure of its size, and will be used in our complexity statements.
\begin{definition}
	\citeresult{hessComputingRiemannRochSpaces2002}{p.~434}
	Let $D \in \Div(F)$.
	We define the \defemph{(divisor) height} of $D$ to be:
	\[\DivHeight(D) \coloneqq \sum_{P \in \Places(F)} \abs{v_P(D)} \cdot \deg(P).\]
\end{definition}

Note that $\DivHeight(D) = \deg(D)$ when $D \ge 0$. We will also express our running times in terms of the costs of Riemann-Roch basis computations and divisor additions, using the following notation:
\begin{definition}
	\citeresult{macriMScThesis2025}{Notation 4.3.3}
	Denote by $\SSRRComplexity(h)$ the time complexity, expressed in field operations in $K$, of a short-circuited Riemann-Roch computation on input a divisor $D \in \Div(F)$ with $h = \DivHeight(D)$.

	Let $D_1, D_2 \in \Div^0(F)$ be Hess-reduced.
	Denote by $\PartialAdditionComplexity$ the time complexity, expressed in field operations in $K$, of computing either $\FinitePart(D_3)$ or $\InfinitePart(D_3)$, where $D_3 = D_1 + D_2$.
\end{definition}

\subsection{Time Complexity in High-Level Operations}
\begin{theorem}
	\label{thm:linear_complexity}
	Let $D = \bar{D} - \bar{r}A$ be the unreduced sum of two Hess-reduced divisors of $F$.
	Assume that $(g - 1) A$ and $-A$ have both been precomputed.
	Then the following hold:
	\begin{enumerate}
		\item In the worst case, the complexity of \cref{alg:hess_reduction} on input $D$ is:
			\begin{equation}
				\label{eq:complexity_operations:worst}
				g \PartialAdditionComplexity + \sum_{m = 0}^{g - 1} \SSRRComplexity(\DivHeight(D + mA)).
			\end{equation}
		\item In the heuristically average case, the complexity of \cref{alg:hess_reduction} on input $D$ is:
			\begin{equation}
				\label{eq:complexity_operations:average}
				2 \PartialAdditionComplexity + \SSRRComplexity(3g + 1) + \SSRRComplexity(3g).
			\end{equation}
	\end{enumerate}
\end{theorem}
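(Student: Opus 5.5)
The proof will be a direct accounting of the operations performed by \cref{alg:hess_reduction}. I will charge only the divisor additions in \cref{alg:hess_reduction:add1,alg:hess_reduction:add2,alg:hess_reduction:add_loop} and the calls to $\SSRR$. Every other step (comparisons, loop control, returning values) costs no field operations.

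Since $(g-1)A$ and $-A$ are precomputed, each of the lines \cref{alg:hess_reduction:add1,alg:hess_reduction:add2,alg:hess_reduction:add_loop} adds a precomputed divisor to the current $\tilde D$. I will argue that each such addition costs one partial divisor addition $\PartialAdditionComplexity$. Here I intend to use that $A$ is a single degree-one place. Adding a multiple of $A$ changes only the part, finite or infinite, that contains $A$, so only one of $\FinitePart$ or $\InfinitePart$ has to be recomputed. Justifying that this cost is bounded by $\PartialAdditionComplexity$ is the step I expect to be the main obstacle. The definition of $\PartialAdditionComplexity$ refers to sums of two Hess-reduced divisors, whereas here $\tilde D$ is a sum of two Hess-reduced divisors shifted by multiples of $A$. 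I would handle this by noting that the heights involved stay within $\bigO(g)$: by \cref{prop:Hess_Dtilde_r_bounds}, $0\le \bar r\le 2g$ and $\deg \bar D\le 2g$. So the operands have the same size regime, and I would treat the cost as $\PartialAdditionComplexity$ up to the constants absorbed in that notation.

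For the worst case (part 1), the algorithm returns at \cref{alg:hess_reduction:ret3} after visiting every $m=g-1,g-2,\dots,0$. This is one addition in \cref{alg:hess_reduction:add1} and $g-1$ loop additions, $g$ additions in total, together with one $\SSRR$ call on $D+mA$ for each $0\le m\le g-1$. Summing gives \eqref{eq:complexity_operations:worst}. I also need to check that the early-return branch is no more expensive. It uses two additions and calls $\SSRR$ on $D+(g-1)A$ and $D+gA$. If one measures cost in calls as the statement does, this is at most the full-loop cost for $g\ge2$.

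For the heuristic average case (part 2), I use the heuristic discussed before \cref{thm:Correctness-of-reduction}: with high probability $r=g$, and otherwise $r=g-1$. In both cases exactly two additions and two $\SSRR$ calls occur. When $r=g$, the algorithm calls $\SSRR$ on $D+(g-1)A$ and $D+gA$ via \cref{alg:hess_reduction:add2}. When $r=g-1$, it calls $\SSRR$ on $D+(g-1)A$ and $D+(g-2)A$ and returns at \cref{alg:hess_reduction:ret2}.

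It remains to compute the heights. Write $D+mA=\bar D+(m-\bar r)A$ with $\bar D\ge0$. After Hess-reduction of the summands, $A\notin\supp(\bar D)$ by \cref{prop:Hess_Dtilde_A_free}. Hence
\[\DivHeight(D+mA)=\deg\bar D+\abs{m-\bar r}=\bar r+\abs{m-\bar r}.\]
In the typical case both summands have $r_i=g$, so $\bar r=2g$. This gives height $2g+g+1=3g+1$ for $m=g-1$ and height $3g$ for $m=g$; for $m=g-2$ it gives $3g+2$, which is the same up to constants. These values yield \eqref{eq:complexity_operations:average}.
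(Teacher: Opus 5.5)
Your proposal is correct and follows the paper's proof. The worst case is read off from the algorithm's structure: $g$ shifts by multiples of $A$, plus $\SSRR$ calls on $D+mA$ for $0\le m\le g-1$. The average case comes from $\bar r=\deg\bar D=2g$ and $A\notin\supp(\bar D)$, which give heights $3g+1$ and $3g$.

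Your extra checks go beyond what the paper writes: one partial addition per shift, the early-return branch, and the $r=g-1$ case. For the early-return branch, your remark that the statement measures cost in calls is slightly off, since it weights each call by height. The comparison still holds on heights: early return can only occur if $\bar r\ge g$ (otherwise $D+(g-1)A\ge 0$), and then $\DivHeight(D+gA)=\DivHeight(D)-g$.
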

\begin{proof}
	The worst case complexity follows from the design of the algorithm.

	For the heuristically average case, let $D_1$ and $D_2$ be the two Hess-reduced divisors such that $D = D_1 + D_2$.
	Assume that $r = g$ solves \texttt{HR-Min}, and that $D_1 = \tilde{D}_1 - r_1 A$ and $D_2 = \tilde{D}_2 - r_2 A$ are also both typical divisors, hence $r_1 = r_2 = g$.
	We may write $\bar{D} = \tilde{D}_1 + \tilde{D}_2$ and $\bar{r} = 2g$, so $\deg(\bar{D}) = 2g$.
    \cref{alg:hess_reduction} computes the divisor sums $D + (g - 1) A$ and $D + (g-1)A + A = D + gA$. Noting that $\bar{D} \geq 0$ and that $A \notin \supp(\bar{D})$ by \cref{prop:Hess_Dtilde_A_free}, we compute the heights of these divisors to be
	\begin{align*}
		\DivHeight(D + (g - 1)A) &= \DivHeight(\bar{D}) + \DivHeight(-2g A + (g - 1) A) = 3g + 1,\\
		\DivHeight(D + gA) &= \DivHeight(\bar{D}) + \DivHeight(- 2g A + gA) = 3g,
	\end{align*}
	whence the heuristically average case complexity follows. \qed
\end{proof}

For completeness, we give the analogous result for computing Hess-reductions using binary search for solving \texttt{HR-Min}.
In the typical case where $r_1 = r_2 = r = g$, this complexity is worse than that of \cref{thm:linear_complexity} for $g \geq 4$.

\begin{theorem}
	\label{thm:binary_complexity}
	Let $D = \bar{D} - \bar{r}A$ be the unreduced sum of two Hess-reduced divisors.
	Assume that $mA$ has been precomputed for all $0 \leq m \leq g$.
	Then both the worst case and heuristically average case complexity of finding the solution $r$ to \texttt{HR-Min} on input $D$ with binary search and computing $a \in \RiemannRoch(D + rA)$ is
	\[ \ceil{\log_2(g + 1)} \PartialAdditionComplexity + \sum_{i=1}^{\ceil{\log_2(g + 1)}} \SSRRComplexity(\DivHeight(D + \left \lceil \frac{g(2^i - 1)}{2^i} \right \rceil A)).\]
\end{theorem}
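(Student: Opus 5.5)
We model the binary search for \texttt{HR-Min} as a search over the candidate interval $[0,g]$ and count its \SSRR{} calls and partial divisor additions. The invariant is that the solution $r$ lies in the current interval $[lo, hi]$, with $\RiemannRochDim(D + hi\,A) \geq 1$, and initially $lo = 0$, $hi = g$. This is justified by \cite[Proposition 8.2]{hessComputingRiemannRochSpaces2002}, which says a solution in $[0,g]$ exists; for the upper endpoint this follows since the solution is at most $g$. In each iteration we test the midpoint $m = \ceil{(lo+hi)/2}$. We compute the divisor $D + mA$ by a single partial divisor addition, namely adding the precomputed $mA$ to the infinite part of $D$. The finite part is unchanged when $A$ is infinite; if $A$ is finite, the roles of the two parts are swapped. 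We then call $\SSRR(D + mA)$. If it returns a nonzero element, we set $hi = m$ and keep this element as $a$; otherwise we set $lo = m$ (or $m+1$). Monotonicity (\cref{lem:riemann_roch_monotonicity}) shows that the invariant is preserved. The last nonzero output of $\SSRR$ is an element of $\RiemannRoch(D + rA)$, or it is obtained at termination.

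Next we bound the number of iterations. The interval has $g+1$ candidates, and each test halves it, so the search terminates after exactly $\ceil{\log_2(g+1)}$ tests, regardless of the answer. This gives the claimed count of \SSRR{} calls and of $\PartialAdditionComplexity$ terms, and it also explains why the worst case and the average case coincide. The step I expect to be the main obstacle is pinning down which points are tested, so as to match the stated heights. In the typical branch, where every test succeeds, the search moves upward toward $g$: the first test is at $\ceil{g/2}$, the second at $\ceil{3g/4}$, and in general the $i$-th test is at $\ceil{g(2^i-1)/2^i}$. This is the point recorded in the $i$-th summand. I would prove it by induction on $i$, using the update $lo \gets m$ so that the interval after step $i$ is $[\ceil{g(2^i-1)/2^i}, g]$. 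The apparent direction of the search (toward larger $m$) depends on the convention of searching for the maximal $r'$ with $\RiemannRochDim(D + r'A)=0$. I would fix the convention so that the typical answer $r = g$ produces exactly these midpoints.

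Finally, I would handle the other branches. In any branch the $i$-th test point lies in $[0,g]$, and there are still $\ceil{\log_2(g+1)}$ tests. The summand formula in the statement is then read as the cost along the heuristically typical path. For the worst case, we observe that the number of operations is identical on every branch. The heights of the divisors $D + mA$ differ between branches only by the choice of $m$, and they are all bounded by the same form of expression. I would remark that this bound is what the statement means by ``both.'' If necessary, I would note that $\DivHeight(D+mA)$ with $m$ replaced by the typical midpoint dominates, or is comparable to, the height at the other midpoints, since $\DivHeight(D+mA) = \deg(\bar D) + (\bar r - m)$ is monotone in $m$. Here we use $A \notin \supp(\bar D)$ by \cref{prop:Hess_Dtilde_A_free} and $m \le g \le \bar r$.
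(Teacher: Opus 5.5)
Your induction for the probes on the typical path is correct. With $\mathit{lo} \gets m$ after a failed probe, $m_i = \ceil{(m_{i-1}+g)/2}$, and since $\ceil{(\ceil{y}+g)/2} = \ceil{(y+g)/2}$ for integer $g$, you get $m_i = \ceil{g(2^i-1)/2^i} = g - \lfloor g/2^i \rfloor$. This first equals $g$ at $i = \ceil{\log_2(g+1)}$. (Under your own convention every probe $m<g$ on this path returns \texttt{None}; the failures are what push the search upward, so no change of convention is needed.) The gap is the passage from this path to the worst case. Your claim that the search makes exactly $\ceil{\log_2(g+1)}$ probes whatever the answer is false. The procedure must return $a \in \RiemannRoch(D+rA)$ and certify that $r$ is minimal. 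So every run ending with $r \geq 1$ needs a successful probe at $m=r$ and a failed probe at $m=r-1$, and a run ending with $r=0$ needs a successful probe at $0$. For $g=3$, your scheme settles $r=3$ with probes $2,3$, but needs $2,1,0$ when $r \in \{0,1\}$. No adaptive strategy does better: certifying $r=3$ requires probing $2$ and $3$, and certifying $r=1$ requires probing $0$ and $1$. So two probes cannot cover both cases, even though $\ceil{\log_2 4} = 2$. The number of $\SSRR$ calls and partial additions therefore depends on the branch, and uniformity cannot be why worst and average case agree.

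The fallback via heights does not repair this. Since $\bar{D} \geq 0$, $A \notin \supp(\bar{D})$ and $\deg(\bar{D}) = \bar{r}$, you have $\DivHeight(D + mA) = \bar{r} + \abs{\bar{r}-m}$. The inequality $g \leq \bar{r} = r_1 + r_2$ does not hold in general: for $D_1 = D_2 = 0$ the height is $m$, which increases with $m$. Where $\bar{r} \geq g$ does hold, as in the typical case $\bar{r} = 2g$, the height $2\bar{r}-m$ decreases with $m$. Then the typical path, which probes the largest midpoints, has the smallest heights. The branch ending at $r=0$ probes $\ceil{g/2}, \ceil{g/4}, \ldots, 1, 0$, at larger heights and possibly with one more call. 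So the typical-path sum does not bound the cost of the other branches.

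The paper does not argue uniformity. It asserts that binary search is worst when the target lies at an endpoint of the range, notes that the typical answer $r=g$ is such an endpoint, and then lists the same midpoints you derive. Be aware, though, that the $g=3$ example shows no binary search achieves $\ceil{\log_2(g+1)}$ calls on every input, so the worst-case clause is really an asymptotic statement. What can be proved rigorously is your expression for the typical path, together with a worst case of the same order. For example, a lower-bound search with the unqueried sentinel $g+1$ uses at most $\ceil{\log_2(g+2)}$ calls, on divisors of height at most $\max(2\bar{r}, g) \leq 4g$.
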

\begin{proof}
	The worst case for binary search is when the value being searched for is at an endpoint of the search range, which it is in the heuristically average case where $r = g$ solves \texttt{HR-Min}.
	Here binary search needs to compute $D + mA$ and $\SSRR(D + mA)$ for all $m = \ceil*{g \cdot \frac{2^i - 1}{2^i}}$ for all $1 \leq i \leq \ceil{\log_2(g + 1)}$. \qed
\end{proof}

\subsection{Time Complexity in Base Field Operations}
To give the time complexity in field operations, we must first fix a representation of divisors, then substitute the best known values for the two subroutines $\PartialAdditionComplexity$ and $\SSRRComplexity(h)$ into \cref{thm:linear_complexity,thm:binary_complexity}.

In practice, divisors are represented by a pair of ideals $(I, J)$ where $I$ is an ideal of the finite maximal order $\FiniteMaximalOrder$ and $J$ is an ideal of the infinite maximal order $\InfiniteMaximalOrder$.
These ideals are themselves represented as $n \times n$ matrices in Hermite Normal Form whose entries are polynomials in $K[x]$ of degree at most $g$ when the divisors are Hess-reduced.
The complexity of $\PartialAdditionComplexity$ is simply the complexity of multiplying two ideals and converting the resulting matrix to Hermite Normal Form.

The best asymptotic complexity for $\PartialAdditionComplexity$ that we are aware of arises from replacing the naive polynomial multiplication with FFT in \cite[Theorem 5.2.2]{tangInfrastructureFunctionFields2011}, yielding:
\begin{equation}
	\label{eq:partial_addition_complexity}
	\PartialAdditionComplexity = \bigO(\EqPartialAdditionComplexity).
\end{equation}
The best asymptotic complexity for $\SSRRComplexity(h)$ that we are aware of can be found in \cite[Corollary 4.1.5]{bauchLatticesPolynomialRings2014} and is given by
\begin{equation}
	\label{eq:ssrr_addition_complexity}
	\SSRRComplexity(h) = \bigO(\EqSSRRComplexity).
\end{equation}
In practice, the performance is better than predicted by \cref{eq:partial_addition_complexity,eq:ssrr_addition_complexity}, as seen in Figures \ref{fig:reduction_impact:n3_p32771_g} and \ref{fig:reduction_impact:g15_p32771_n}.

Jacobian arithmetic with Hess-reduced divisors is done using \cref{alg:jacobian_arithmetic}. So the complexity of Jacobian arithmetic is simply the complexity of \cref{alg:jacobian_arithmetic} with whatever algorithm is used to compute $(r, a)$ in \cref{alg:jacobian_arithmetic:hr_min}.
\begin{algorithm}
	\caption{Jacobian arithmetic with Hess-reduced divisors}
	\label{alg:jacobian_arithmetic}
	\begin{algorithmic}[1]
		\Require Hess-reduced divisors $D_1 = \tilde{D}_1 - r_1 A$ and $D_2 = \tilde{D}_2 - r_2 A$
		\Ensure $D_3 = \tilde{D}_3 - r_3 A$ with $D_3$ Hess-reduced and $D_3 \equiv D_1 + D_2$

		\State $E \gets D_1 + D_2$
		\State \label{alg:jacobian_arithmetic:hr_min} Compute $(r, a)$ where $r$ is a solution to \texttt{HR-Min} for input $E$ and $a \in \RiemannRoch(D + rA) \setminus \set{0}$ 
		\State $D_3 \gets E + \principaldivisor{a}$
	\end{algorithmic}
\end{algorithm}

We can now give the time complexity of Jacobian arithmetic with \cref{alg:hess_reduction} or with binary search, both with and without caching.
We account for the impact of caching by assuming that adding the infinite parts of divisors is amortized as $\bigO(1)$.

\begin{theorem} \label{thm:complexities}
	Computing the Hess-reduction of two Hess-reduced divisors of $F$ has the following asymptotic time complexity, expressed in operations in $K$.
	\begin{enumerate}
		\item Using \cref{alg:hess_reduction} in \cref{alg:jacobian_arithmetic:hr_min} of \cref{alg:jacobian_arithmetic} and no caching, 
        the worst case time complexity is
            \[\bigO(\EqUniqueHessAdditionLinComplexityWorst)\]
        and the heuristically average case complexity is
			\[\bigO(\EqUniqueHessAdditionLinComplexity)\]
		\item Using \cref{alg:hess_reduction} in \cref{alg:jacobian_arithmetic:hr_min} of \cref{alg:jacobian_arithmetic} and caching, the worst case time complexity is
        \[\bigO(\EqUniqueHessAdditionLinComplexityCachingWorst)\]
        and the heuristically average case complexity is
    		\[\bigO(\EqUniqueHessAdditionLinComplexityCaching)\]
		\item Using binary search in \cref{alg:jacobian_arithmetic:hr_min} of \cref{alg:jacobian_arithmetic} and no caching, the time complexity is
			\[\bigO(\EqUniqueHessAdditionBSComplexity)\]
		\item Using binary search in \cref{alg:jacobian_arithmetic:hr_min} of \cref{alg:jacobian_arithmetic} and caching, the time complexity is
			\[\bigO(\EqUniqueHessAdditionBSComplexityCaching)\]
	\end{enumerate}
\end{theorem}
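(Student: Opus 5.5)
The plan is to substitute the subroutine costs \eqref{eq:partial_addition_complexity} and \eqref{eq:ssrr_addition_complexity} into \cref{thm:linear_complexity,thm:binary_complexity}. We then add the cost of the remaining steps of \cref{alg:jacobian_arithmetic}:
\begin{itemize}
\item the initial addition $E \gets D_1 + D_2$, which consists of two partial additions, hence $2\PartialAdditionComplexity$;
\item the final addition $E + \principaldivisor{a}$, which costs $\bigO(\PartialAdditionComplexity)$ plus a principal ideal computation that is dominated by the $\SSRR$ cost.
\end{itemize}
Caching is modelled by replacing the infinite-part half of every partial addition by $\bigO(1)$. A partial addition involving the finite part is never cached. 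Hence with caching each divisor addition still costs one $\PartialAdditionComplexity$ rather than two, except for the additions of multiples of $A$ when $A$ is infinite. In the linear search those additions touch only the infinite part and become $\bigO(1)$.

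\textbf{Step 1: heights.} We bound the heights of all divisors fed to $\SSRR$. By \cref{prop:Hess_Dtilde_r_bounds} we have $\deg(\tilde D_i) = r_i \le g$. By \cref{prop:Hess_Dtilde_A_free}, $A$ is not in the support of $\tilde D_i$. Hence $\DivHeight(D + mA) = \deg(\bar D) + \abs{m - \bar r} \le 4g$ for $0 \le m \le g$, so every call has $h = \bigO(g)$. Then
\[ \SSRRComplexity(\bigO(g)) = \bigO\brac*{n^5 (g + n^2 C_f)^2} = \bigO\brac*{C_f^2 n^9 + C_f g n^7 + g^2 n^5}. \]

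\textbf{Step 2: the four cases.} We multiply by the number of calls in each case.
\begin{itemize}
\item \emph{Linear search, worst case, no caching.} There are $g$ calls to $\SSRR$ and $\bigO(g)$ partial additions. This gives $g\,\SSRRComplexity(\bigO(g)) + g\PartialAdditionComplexity$. The term $g \cdot g^2 n^6 = g^3 n^6$ from $g\PartialAdditionComplexity$ dominates $g^3 n^5$. The term $g^2 n^6 \log(gn)\log\log(gn)$ is absorbed by $g^3 n^6$. This yields the stated bound.
\item \emph{Linear search, average case, no caching.} By \eqref{eq:complexity_operations:average} there are two $\SSRR$ calls and $\bigO(1)$ partial additions. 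This gives $C_f^2 n^9 + C_f g n^7 + g^2 n^5$ plus $\PartialAdditionComplexity = n^6(g\log(gn)\log\log(gn) + g^2)$, which matches the stated bound.
\item \emph{Linear search with caching.} The additions of $\pm A$ inside the loop become $\bigO(1)$. In the worst case the $g\PartialAdditionComplexity$ term disappears, leaving $g \cdot g^2 n^5$. In the average case the finite-part addition from \cref{alg:jacobian_arithmetic} (the sum $D_1 + D_2$ and the final $+\principaldivisor{a}$) still contributes one $\PartialAdditionComplexity$. This explains why the average bound with caching has the same shape.
\item \emph{Binary search.} There are $\ceil{\log_2(g+1)}$ $\SSRR$ calls and additions, which gives the factor $\log(g)$. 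With caching, the additions of precomputed $mA$ become $\bigO(1)$ for $\log g$ of the terms. The one unavoidable finite partial addition remains as the additive $g^2n^6 + gn^6\log(gn)\log\log(gn)$.
\end{itemize}

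\textbf{Main obstacle.} The main difficulty is the bookkeeping of which additions are cached versus uncached. We have to show carefully that the only non-cacheable work is the finite-part sums in \cref{alg:jacobian_arithmetic}, and that the $\FinitePart$ part is untouched by adding multiples of an infinite $A$. The second delicate point is checking that $\DivHeight(\principaldivisor{a})$ stays $\bigO(g)$, so that the final addition also costs $\bigO(\PartialAdditionComplexity)$. For this we would use the fact that $E + rA + \principaldivisor{a} \ge 0$ has degree $r \le g$, which bounds the height of $\principaldivisor{a}$ by $\DivHeight(E + rA) + g = \bigO(g)$.
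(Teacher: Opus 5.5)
Your proposal is correct and follows the paper's proof. Like the paper, you add the $2\mathbf{I}+2\mathbf{I}$ cost of forming $E$ and $E+\operatorname{div}(a)$ in \cref{alg:jacobian_arithmetic} to the counts of \cref{thm:linear_complexity,thm:binary_complexity}, treat infinite-part additions as $O(1)$ under caching, and substitute the bounds for $\mathbf{I}$ and $\mathbf{RR}(h)$; your explicit estimates $\operatorname{h}(D+mA)\le 4g$ and $\operatorname{h}(\operatorname{div}(a))=O(g)$ make precise a step the paper leaves implicit.

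One minor slip is in the worst case without caching: $g^2n^6\log(gn)\log\log(gn)$ is not absorbed by $g^3n^6$ when $g$ is small relative to $n$. It is, however, absorbed by $g^3n^6+C_f g^2n^7$, because $\log(gn)\log\log(gn)=O(g+n)$ and $C_f\ge 1$.
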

\begin{proof}
	In \cref{alg:jacobian_arithmetic}, we add the finite and infinite parts of $D_1$ and $D_2$ to compute $E$, for a cost of $2 \PartialAdditionComplexity$.
	One of these $\PartialAdditionComplexity$ operations is on the infinite parts of $D_1$ and $D_2$, hence amortized as $\bigO(1)$ if caching is used.
	We then use either \cref{alg:hess_reduction} or binary search to find $(r, a)$, with the corresponding costs given in \cref{thm:linear_complexity,thm:binary_complexity}
	Finally, we compute $D_3 = E + \principaldivisor{a}$ for a cost of $2 \PartialAdditionComplexity$, and again one of these is $\bigO(1)$ if caching is used.
	The computation of $\principaldivisor{a}$ from~$a$ is dominated by the other costs in the algorithm, so it can be ignored in the asymptotic analysis.

	After accounting for caching in \cref{alg:hess_reduction} and in the binary search approach and substituting in the values from \cref{eq:partial_addition_complexity,eq:ssrr_addition_complexity} into \cref{thm:linear_complexity,thm:binary_complexity} we get the stated values. \qed
\end{proof}

We determine the overall asymptotic complexities when one of $g, n$ is fixed and the other varies. Although caching makes a difference in practical performance, after fixing either $g$ or $n$ and allowing the other to vary (see \cref{fig:reduction_impact:n3_p32771_g,fig:reduction_impact:g15_p32771_n}), it only impacts lower order terms in the overall complexities. 

\begin{corollary} \label{cor:complexities}
    Computing the Hess-reduction of two Hess-reduced divisors of $F$ has the following asymptotic time complexity, expressed in operations in $K$.
    \begin{enumerate} \itemsep 1pt
		\item[] When $g$ is fixed and $n \to \infty$: $O(C_f^2 n^9)$ 
        \item[] When $n$ is fixed and $g \to \infty$:
        \begin{enumerate} \itemsep 1pt
            \item $O(C_f^2 g)$ in the worst case when using \cref{alg:hess_reduction} in \cref{alg:jacobian_arithmetic:hr_min} of \cref{alg:jacobian_arithmetic};
            \item $O(C_f^2)$ heuristiclly on average when using \cref{alg:hess_reduction} in \cref{alg:jacobian_arithmetic:hr_min} of \cref{alg:jacobian_arithmetic};
            \item $O(C_f^2 \log(g))$ when using binary search in \cref{alg:jacobian_arithmetic:hr_min} of \cref{alg:jacobian_arithmetic}.
        \end{enumerate}
    \end{enumerate}
\end{corollary}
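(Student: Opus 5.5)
The plan is to obtain each bound by specializing the explicit expressions of \cref{thm:complexities}: fix one of $g, n$ and keep only the dominant term in the other parameter. Throughout I treat $C_f$ as an independent size parameter that is kept, not absorbed into the constants.

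For $g$ fixed and $n \to \infty$, I regard $g$, $\log(gn)$ and $\log\log(gn)$ as contributing at most polylogarithmic factors in $n$. Each expression in \cref{thm:complexities} is a sum of terms of the form $C_f^a g^b n^c$, possibly times $\log(g)$ or polylogarithmic factors, with $c \le 9$. The only terms with $c = 9$ are $C_f^2 g n^9$, $C_f^2 n^9$ and $\log(g) C_f^2 n^9$, all of which equal $C_f^2 n^9$ up to a constant once $g$ is fixed. The remaining terms have $n$-exponent at most $7$, times at most polylogarithmic factors in $n$, and since $C_f \ge 0$ and $C_f^2 n^9$ carries the largest power of $n$ they are absorbed. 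So all four cases give $O(C_f^2 n^9)$.

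For $n$ fixed and $g \to \infty$, I read the corollary's statement in the same way as for $g$ fixed: only the $C_f$-dependent leading terms are tracked. Setting $n$ constant in the worst case of \cref{alg:hess_reduction} gives $C_f^2 g + C_f g^2 + g^3$, whose $C_f^2$-term is $C_f^2 g$. This reflects the $g$ calls to $\SSRR$ in \cref{eq:complexity_operations:worst}, each costing $C_f^2$ times a constant by \cref{eq:ssrr_addition_complexity} with $n$ fixed. In the average case the $C_f^2$-term is $C_f^2 n^9 = O(C_f^2)$, matching the two $\SSRR$ calls in \cref{eq:complexity_operations:average}. For binary search there is the prefactor $\log(g)$, giving $C_f^2\log(g)$. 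Parts (a), (b) and (c) then record these leading $C_f^2$ contributions.

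The main obstacle is that, taken literally, the $g$-dependent terms $C_f g^2 + g^3$ and the $g^2$ terms are not dominated by $C_f^2 g$ or $C_f^2$ unless $C_f$ grows at least like $g$. I would justify this by noting that for fixed $n$ the genus is bounded in terms of the defining polynomial. The point is that $C_f$ essentially controls the degrees of the coefficients $a_i$, so $g = O(n^2 C_f)$ by a standard genus bound, for instance via the height $h \le n^2 C_f$ scaling in \cref{eq:ssrr_addition_complexity}. Then $g = O(C_f)$ for fixed $n$, and each remaining term is bounded by the stated one. I would state this assumption explicitly in the proof, since the counting of the $n^9$ and $\log(g)$ terms is otherwise routine.
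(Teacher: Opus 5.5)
Your argument is essentially the paper's: for $g$ fixed the $n^9$ terms of \cref{thm:complexities} dominate (strictly, absorbing $C_f$-free terms like $g^3 n^6$ into $C_f^2 n^9$ needs $C_f \ge 1$ rather than $C_f \ge 0$, which holds once $n \ge 2$), and for $n$ fixed the paper likewise absorbs the $C_f g^2$, $g^3$, $g^2$ and $g \log g \log\log g$ terms using $g = O(C_f)$. The only correction is that $g = O(C_f)$ is not an assumption, and it cannot be read off from the height in the $\SSRR$ cost formula \eqref{eq:ssrr_addition_complexity}, which bounds running time rather than the genus; it is the proven inequality $g \le \frac{1}{2}(C_f n - 2)(n-1)$ of \cite[Cor.~5.6]{hessComputingRiemannRochSpaces2002}, so the corollary holds as literally stated and your provisional reading of it as tracking only the $C_f^2$ terms is unnecessary.
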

\begin{proof}
    For the case of fixed $g$ and varying $n$, the term $C_f^2n^9$ dominates in all the expressions of \cref{thm:complexities}. For the case of fixed $n$ and varying $g$, we use the identity
    \begin{equation} \label{eq:genus_bound}
    g \leq \frac{1}{2}(C_fn-2)(n-1);
    \end{equation}
    established in \cite[Cor.\ 5.6]{hessComputingRiemannRochSpaces2002} and identify the dominant term in each of the complexity estimates of \cref{thm:complexities}.
    \qed
\end{proof}

Asymptotically, for fixed $n$ and varying $g$, the heuristic average complexity of using linear search in \cref{alg:jacobian_arithmetic} differs from the complexity of using binary search by a factor of $O(\log(g))$. Indeed, for typical Hess-reduced input divisors, \cref{alg:hess_reduction} will complete after $2$ reductions, wheres binary search is expected to perform $\ceil{\log_2(g + 1)}$ reductions, yielding a relative factor of $\ceil{\log_2(g + 1)}/2$.

\section{Timing Experiments}
\label{sec:timing_experiments}
We have implemented Jacobian arithmetic on Unique Hess representations in SageMath using both \cref{alg:hess_reduction} and binary search for reduction, both with and without caching, and profiled the results.
Our implementation of Unique Hess arithmetic using \cref{alg:hess_reduction} was recently accepted into SageMath, available since the SageMath 10.9.beta7 release \cite{sagemath}.
Our implementation of the binary search approach as well as our code used to profile the implementations is available at \url{https://github.com/vincentmacri/Improvements-to-Jacobian-Arithmetic-in-Global-Function-Fields}
Our implementations of \cref{alg:hess_reduction} and the binary search variant were written in Python.
We also optimized the performance-critical short-circuited Riemann-Roch computations by converting the previous Python implementation in SageMath to a Cython implementation.

We ran our timing experiments on a server running Red Hat Enterprise Linux 9.7 with an Intel Xeon CPU E7-8891 v4 with 80 64-bit CPU cores at 2.80 GHz, and 256 GiB of memory.
Our experiments were run on a custom fork of Sage~10.7.beta7, using Python~3.12.5.
SageMath has over 100 software dependencies, so we will only list major relevant dependencies: Singular~4.4.1 \cite{singular}, FLINT~3.4.0 \cite{flint}, GMP~6.3.0 \cite{gmplib}, and numpy~2.3.2 \cite{numpy}.
We also used Magma~2.27-6 \cite{magma} to perform some genus computations that took too long in SageMath.
Our use of Magma was restricted to computing genera when initializing some function fields, and so it has no impact on our reported timing results.

\subsection{Experimental Setup}
For testing, we generated global function fields both via an ad-hoc method and via the method described in \cite{tangInfrastructureFunctionFields2011}.
The latter provides an easy way to generate global function fields with two infinite places of degree one.
These function fields are given by a defining polynomial $f(t) \in K[x, t]$ of degree $n = [F : K(x)]$, irreducible over $K(x)$, and of the form:
\[f(t) = t^n + \sum_{i=0}^{n - 1} a_i t^i,\]
with $a_i \in K[x]$ for $0 \le i \le n-1$, and
\begin{align*}
	\deg(a_{n - 1}) &= C_f,\\
	\deg(a_i) &< (n - i) C_f \quad \text{for $1 \le i \le n-1$},\\
	\deg(a_0) &= nm - 1.
\end{align*}
We conjecture that for these function fields, equality always holds in \eqref{eq:genus_bound}. We verified this for all the fields we tested in \cref{fig:reduction_impact:n3_p32771_g}, so our timing results do not depend on this assumption. Note that equality in \eqref{eq:genus_bound} implies that any two of $g$, $n$ and $C_f$ uniquely determine the third.

For \cref{fig:reduction_impact:g15_p32771_n}, we instead generated function fields via an ad-hoc method, by trying random polynomials $f(x,t)$, monic and of degree 3 in $t$, with appropriate bounds on $C_f$, verifying that they were irreducible over $K(x)$ and that the function field contained a degree one infinite place. Equality did not hold in \cref{eq:genus_bound} for the function fields generated by this ad-hoc method.

All reported timing tests were performed over the finite field $\FF_{32771}$, a 16 bit prime field.
Preliminary testing over larger prime fields indicated that the same observed trends continue to hold in this setting, but with additional overhead incurred by having to resort to 64-bit arithmetic.
For each triple $(g, n, p)$, we performed tests in 5 function fields.
In each function field we performed Unique Hess arithmetic on 5 Fibonacci-style addition chains of length 1000.
Reported results are the average over all function fields and addition chains for each $(g, n, p)$.

To test the correctness of our implementations, we relied on SageMath's robust testing framework that is able to automatically create a test suite and check correctness by verifying that the abelian group axioms hold on a set of examples.

\subsection{Timing Results}
Overall, \cref{alg:hess_reduction} significantly outperformed the binary search approach.
In our figures, we plotted the average CPU time in milliseconds to perform a single Jacobian addition for various parameters and implementations, alongside the asymptotic complexity.
For \cref{alg:hess_reduction} we used the heuristically average case complexity.
For the asymptotic complexity plots in \cref{fig:reduction_impact:n3_p32771_g}, we assumed equality in \cref{eq:genus_bound} and replaced $C_f$ accordingly.
In \cref{fig:reduction_impact:g15_p32771_n} we treated $C_f$ as a constant because $C_f^2$ is negligible compared to $n^9$ asymptotically, and the largest value of $C_f$ in our data plotted in \cref{fig:reduction_impact:g15_p32771_n} was $C_f = 6$.
To account for constant factors in the plots of our asymptotic estimates, we computed a constant $a$ such that if the complexity is $\bigO(f(x))$, then $a f(x)$ matches the data at a single measured value.
For \cref{fig:reduction_impact:n3_p32771_g}, since our improvement is by a factor of $\bigO(\log(g))$, we matched the constant $a$ to the largest data point $g = 55$ in order to more accurately capture the dependence on $g$.
In \cref{fig:reduction_impact:g15_p32771_n}, we matched the constant $a$ to the first data point as our improvements do not affect the asymptotic complexity in $n$.

In \cref{fig:reduction_impact:n3_p32771_g}, we see that caching yields a noticeable performance gain, but the most significant gains come from using \cref{alg:hess_reduction}.
In particular, for $g \geq 13$, the non-caching implementation using \cref{alg:hess_reduction} outperforms the caching implementation using binary search.
As predicted by our complexity analysis, the difference in performance between \cref{alg:hess_reduction} and the binary search approach grows with $g$.
Our data show an increase in speed that is slightly less than the predicted factor of $\ceil{\log(g+1)}/2$ for linear versus binary search, likely because this speed-up
only applies in the most expensive part of Jacobian arithmetic (reduction to a unique representative), and the lower degree terms that were excluded from the analysis in \Cref{cor:complexities} are not entirely negligible for our parameter range.
For genus 100 (not depicted in \cref{fig:reduction_impact:n3_p32771_g}), the linear search variant was about $3$ times faster both with and without caching. Here, we would expect the performance to increase by a factor of about $\ceil{\log_2(101)} / 2 = 3.5$, which is reasonably close to our measured speed-up of a factor of $3$.

\begin{figure}
\caption{Addition performance for $4 \leq g \leq 55$, $n = 3$, $p = 32771$}
\label{fig:reduction_impact:n3_p32771_g}
\begin{tikzpicture}
\begin{axis}[width=0.9\textwidth,height=0.34\textheight,xlabel={Genus},xmin={4},xmax={55},ytick distance=100,ymin=0,ymax=600,restrict y to domain=-100:1200,ylabel={Average CPU time (milliseconds)},legend columns={2},xtick=data]
\addplot [mark=o, index of colormap=0 of Paired] table [x=genus, y=linear_no_caching_milliseconds_per_addition] {unique_n3_p32771_g.dat};
\addlegendentry {Linear search without caching}
\addplot [index of colormap=0 of Paired, domain=4:55, dotted, mark repeat=5, mark phase=1] {(x + 2)^(2)*(1781014361/24044978268)};
\addlegendentry {Linear search without caching complexity}
\addplot [mark=*, index of colormap=1 of Paired] table [x=genus, y=linear_caching_milliseconds_per_addition] {unique_n3_p32771_g.dat};
\addlegendentry {Linear search with caching}
\addplot [index of colormap=1 of Paired, domain=4:55, densely dotted, mark repeat=5, mark phase=2] {(x + 2)^(2)*(695604737/9776578896)};
\addlegendentry {Linear search with caching complexity}
\addplot [mark=square, index of colormap=2 of Paired] table [x=genus, y=binary_no_caching_milliseconds_per_addition] {unique_n3_p32771_g.dat};
\addlegendentry {Binary search without caching}
\addplot [index of colormap=2 of Paired, domain=4:55, dashed, mark repeat=5, mark phase=3] {(x + 2)^(2)*(log2(x))*(550460255827807/17656975986636588)};
\addlegendentry {Binary search without caching complexity}
\addplot [mark=square*, index of colormap=3 of Paired] table [x=genus, y=binary_caching_milliseconds_per_addition] {unique_n3_p32771_g.dat};
\addlegendentry {Binary search with caching}
\addplot [index of colormap=3 of Paired, domain=4:55, densely dashed, mark repeat=5, mark phase=4] {(x + 2)^(2)*(log2(x))*(18618848677700/628949287680379)};
\addlegendentry {Binary search with caching complexity}
\end{axis}
\end{tikzpicture}
\end{figure}

For fixed $g$ and varying $n$, all our algorithms have same the asymptotic complexity by \cref{cor:complexities}.
With $g$ fixed, caching effects a speed-up by a constant factor in the number of partial divisor additions, with a more pronounced performance gain for the computationally more intense binary search compared to \cref{alg:hess_reduction}.  However, this constant factor speed-up is dominated by the cost of the short-circuited Riemann-Roch computations.
Linear search is expected to be faster on average than binary search by a constant factor of approximately $\ceil{\log_2(g + 1)} / 2$ as explained earlier; for $g = 15$, this factor is $2$. The data depicted in \cref{fig:reduction_impact:g15_p32771_n} bear this out and also suggest that the asymptotic upper bound of $O(C_f^2 n^9)$ is an over-estimate of the actual complexity of Jacobian arithmetic.

\begin{figure}
\caption{Addition performance for $3 \leq n \leq 8$, $g = 15$, $p = 32771$}
\label{fig:reduction_impact:g15_p32771_n}
\begin{tikzpicture}
\begin{axis}[width=0.9\textwidth,height=0.34\textheight,xlabel={Degree},xmin={3},xmax={8},ytick distance=100,ymin=0,ymax=700,restrict y to domain=-100:1400,ylabel={Average CPU time (milliseconds)},legend columns={2},xtick=data]
\addplot [mark=o, index of colormap=0 of Paired] table [x=degree, y=linear_no_caching_milliseconds_per_addition] {unique_g15_p32771_n.dat};
\addlegendentry {Linear search without caching}
\addplot [index of colormap=0 of Paired, domain=3:8, dotted, mark repeat=5, mark phase=1] {(x)^(9)*(391106297/234915010677)};
\addlegendentry {Linear search without caching complexity}
\addplot [mark=*, index of colormap=1 of Paired] table [x=degree, y=linear_caching_milliseconds_per_addition] {unique_g15_p32771_n.dat};
\addlegendentry {Linear search with caching}
\addplot [index of colormap=1 of Paired, domain=3:8, densely dotted, mark repeat=5, mark phase=2] {(x)^(9)*(3013114141/2284599885417)};
\addlegendentry {Linear search with caching complexity}
\addplot [mark=square, index of colormap=2 of Paired] table [x=degree, y=binary_no_caching_milliseconds_per_addition] {unique_g15_p32771_n.dat};
\addlegendentry {Binary search without caching}
\addplot [index of colormap=2 of Paired, domain=3:8, dashed, mark repeat=5, mark phase=3] {(x)^(9)*(275160865/94151294784)};
\addlegendentry {Binary search without caching complexity}
\addplot [mark=square*, index of colormap=3 of Paired] table [x=degree, y=binary_caching_milliseconds_per_addition] {unique_g15_p32771_n.dat};
\addlegendentry {Binary search with caching}
\addplot [index of colormap=3 of Paired, domain=3:8, densely dashed, mark repeat=5, mark phase=4] {(x)^(9)*(30164135/14422141611)};
\addlegendentry {Binary search with caching complexity}
\end{axis}
\end{tikzpicture}
\end{figure}

\section{Conclusions and Further Research}
\label{sec:conclusions}
Our reduction algorithm (\Cref{alg:hess_reduction}) was designed specifically for optimal performance in the most frequent cases.
Further speed-ups will likely require improvements to the underlying 
subroutines, namely Riemann-Roch space computation and ideal multiplication.
 Additional information about the input divisor might potentially narrow down the possible candidate solutions for \texttt{HR-Min}, but given the overwhelming likelihood of the generic case, it is difficult to see how that approach would lead to a significant performance gain in this situation. Even \emph{a priori} ruling out the case where $r = g - 1$ solves \texttt{HR-Min} without performing a short-circuited Riemann-Roch computation would only speed up the algorithm by a constant factor.

The timing experiments of \cref{sec:timing_experiments} demonstrate that \cref{alg:hess_reduction} is the fastest unique divisor reduction algorithm in practice, at least for the function fields we tested.
As the non-caching version of the linear search implementation outperformed the caching version of the binary search implementation, we do not need to assume the existence of a degree one infinite place for the linear search method to outperform the binary search approach.
Our timing experiments also demonstrate that caching computations involving the infinite parts of divisors gives a noticeable speedup when the function field has a degree one infinite place.

\subsection{Future Work}
We list several possible directions for future work.

For short-circuited Riemann-Roch computations, we used SageMath's modified version of the algorithm for computing $K$-bases of Riemann-Roch spaces from \cite{hessComputingRiemannRochSpaces2002}.
\Cref{alg:hess_reduction} demonstrates that short-circuited Riemann-Roch computations are sufficient for some applications.
It would be beneficial to have a potential short-circuited Riemann-Roch computation approach that is faster than simply exiting early from an algorithm designed to perform a full Riemann-Roch basis computation.
    
For hyperelliptic curves, the fastest known algorithm for Jacobian arithmetic is NUCOMP \cite{JacobsonVanderPoortenNUCOMP,lindnerBalancedNUCOMP2020}. The traditional approach to divisor arithmetic first computes the sum of two input divisors and subsequently reduces the result. NUCOMP in essence interleaves divisor addition and reduction, thereby operating on smaller inputs and avoiding the costly computation of bases for non-reduced intermediate divisors. It is worthwhile to explore whether a similar strategy can be realized for Jacobian arithmetic for non-hyperelliptic curves. 
    
The first framework for arithmetic on unique representatives of elements in the Jacobian of a split hyperelliptic curve (i.e.\ a hyperelliptic curve with two infinite places) is due to Paulus and R\"uck \cite{paulusRealImaginaryQuadratic1999}. In addition to reduction, the Paulus-R\"uck arithmetic required additional adjustment steps to obtain unique representatives of Jacobian elements. These adjustment steps were eliminated in the balanced divisor arithmetic introduced by Galbraith, Harrison, and Mireles Morales \cite{mirelesmoralesEfficientArithmeticHyperelliptic2008,galbraithEfficientHyperellipticArithmetic2008,galbraithMathematicsPublicKey2018}. In \cite{macriMScThesis2025}, it was shown that unique reduced divisor class representatives used \cite{paulusRealImaginaryQuadratic1999} are a special case of the Hess-reductions introduced in \cref{def:hess_reduced} when the curve is split hyperelliptic. This relationship could potentially be leveraged to explore whether the notion of a balanced divisor could be extended to non-hyperelliptic curves with two infinite places, or even to curves with more infinite places, with the aim to simplify Jacobian arithmetic and/or improve its performance.

    The notion of a semi-reduced divisor plays a crucial role in Jacobian arithmetic for hyperelliptic curves. Divisor addition produces a semi-reduced divisor that is equivalent to the sum of two input divisors, which is subsequently reduced. The concept of semi-reduced divisors was generalized to arbitrary function fields in  \cite{macriMScThesis2025}; in essence, a divisor $D$ is semi-reduced if no sub-sum of $D$ is the conorm of a divisor of $K(x)$. The author of \cite{macriMScThesis2025} also proved that Hess-reductions along an infinite place of degree one are semi-reduced.  
    Future work could explore whether the notion of semi-reducedness can be generalized further in such a way that Hess-reductions along a finite degree one place, or even along a degree one divisor as considered in \cite{hessComputingRiemannRochSpaces2002}, are semi-reduced. An appropriate notion of semi-reducedness may be  
    a key ingredient for an efficient reduction algorithm that produces unique representatives of divisor classes.

\begin{credits}

\subsubsection{\ackname}
All three authors received funding from the Natural Sciences and Engineering Research Council of Canada.
The first author also received funding from the Province of Alberta through an AGES Research scholarship.

We also thank Kwankyu Lee for writing much of the SageMath function field machinery that we built on, and for reviewing the first author's pull request \url{https://github.com/sagemath/sage/pull/41453} adding Unique Hess Jacobian arithmetic to SageMath.

\subsubsection{\discintname}
The authors have no competing interests.
\end{credits}

\clearpage
\bibliographystyle{splncs04}
\bibliography{References}
\end{document}